\newtheorem{lema}{Lemma}[section]
\newtheorem{teo}[lema]{Theorem}
\newtheorem{pro}[lema]{Proposition}
\newtheorem{defi}[lema]{Definition}
\newtheorem{com}[lema]{Remark}
\newtheorem{eje}[lema]{Example}
\newtheorem{teo*}{Theorem}
\begin{document}

\title{A characterization of linearly semisimple groups}

\author{Amelia \'{A}lvarez}
\address{Departamento de Matem\'{a}ticas, Universidad de Extremadura,
Avenida de Elvas s/n, 06071 Badajoz, Spain}
\email{aalarma@unex.es}

\author{Carlos Sancho}
\address{Departamento de Matem\'{a}ticas, Universidad de Salamanca,
Plaza de la Merced 1-4, 37008 Salamanca, Spain}
\email{mplu@usal.es}

\author{Pedro Sancho}
\address{Departamento de Matem\'{a}ticas, Universidad de Extremadura,
Avenida de Elvas s/n, 06071 Badajoz, Spain}
\email{sancho@unex.es}

\begin{abstract}
Let $G={\rm Spec}\, A$ be an affine $K$-group scheme and $\tilde A=\{ w \in A^* \colon \dim_K A^* \cdot w \cdot A^*< \infty \}$. Let $\left\langle -,-\right\rangle  \colon A^* \times \tilde{A} \to K$, $\left\langle w,\tilde{w} \right\rangle := tr(w \tilde{w} )$, be the trace form. We prove that $G$ is linearly reductive if and only if
the trace form is non-degenerate on $A^*$.
\end{abstract}

\maketitle

\section{Introduction}

Let $K$ be a field and let $\bar{K}$ be its algebraic closure. A finite $K$-algebra (associative with unit) $B$ is separable (i.e., $B \otimes_K \bar{K} = \prod_i M_{n_i}(\bar K)$) and $g.c.d.\{ n_i,{\rm char}\, K\}=1$, for every $i$, if and only if the trace form of $B$ is non-degenerate.

Let $G$ be a finite group of order $n$ and let $KG$ be the enveloping algebra of $G$. $G$ is linearly semisimple if and only if $KG$ is a separable $K$-algebra. By Maschke's theorem $G$ is linearly semisimple if and only if $g.c.d.\{n,{\rm char}\, K\}=1$.
It holds that $G$ is linearly semisimple if and only if the trace form of $KG$ is non-degenerate. This theorem has been generalized to finite-dimensional Hopf algebras and Frobenius algebras (see \cite{Larson} and \cite{M}).

Now let $G={\rm Spec}\, A$ be an affine $K$-group scheme. Let $\tilde{A} = \{ w \in A^* \colon \dim_K A^* \cdot w \cdot A^*< \infty \} \subseteq A^*$, which is a bilateral ideal of $A^*$. Let $tr \colon \tilde{A} \to K$ defined by $tr(\tilde{w})=$ trace of the endomorphism of $\tilde{A}$, $\tilde{w'} \mapsto \tilde{w} \tilde{w'}$, and $\left\langle -,-\right\rangle  \colon A^* \times \tilde{A} \to K$, $\left\langle w,\tilde{w} \right\rangle := tr(w \tilde{w} )$, the associated trace form.

In this paper we prove that $G$ is linearly semisimple if and only if the trace form is non-degenerate on $A^*$, that is, $\varphi \colon A^* \to \tilde{A}^*$, $\varphi(w):= \left\langle w,- \right\rangle $ is injective. We also prove that $G$ is linearly semisimple if and only if the trace form is non-degenerate on $\tilde{A}$, that is, $\phi \colon \tilde{A} \to \tilde{A}^*$, $\phi(\tilde{w}) := \left\langle -, \tilde{w} \right\rangle $ is injective, and $\tilde{A}$ is dense in $A^*$.

$G={\rm Spec}\, A$ is linearly semisimple if and only if there exists $w_G \in {A^*}^G$ such that $w_G(1)=1$ (see \cite{Sw}). Let $* \colon A \to A$, $a\mapsto a^*$ be the morphism induced on $A$ by the morphism $G \to G$, $g\mapsto g^{-1}$ and let $F \colon A \to A^*$, $F(a):=w_G(a^*\cdot -)$ be the ``Fourier transform". In this situation we prove that $\phi \colon \tilde{A} \to \tilde{A}^*$ values bijectively in $A$ and that the inverse morphism $\phi^{-1} \colon A \to \tilde{A}$ coincides with the Fourier transform.

\section{Preliminary results}

If $G$ is a smooth algebraic group over an algebraically closed field  one may only consider the rational points of $G$ in order to resolve many questions in the theory of linear representations of $G$. In general, for any $K$-group, one may regard the functor of points of the group and its linear representations and in this way $K$-groups and their linear representations may be handled as mere sets, as it is well known.

Let $R$ be a commutative ring (associative with unit). All functors considered in this paper are covariant functors over the category of commutative $R$-algebras (with unit). Given an $R$-module $M$, the functor ${\mathcal M}$ defined by ${\mathcal M}(S) := M \otimes_R S$ is called a quasi-coherent $R$-module. The functors $M \rightsquigarrow {\mathcal M}$, ${\mathcal M} \rightsquigarrow {\mathcal M}(R)$ establish an equivalence between the category of $R$-modules and the category of quasi-coherent $R$-modules (\cite[1.12]{Amel}). In particular, ${\rm Hom}_R ({\mathcal M},{\mathcal M'}) = {\rm Hom}_R (M,M')$.

If $\mathbb M$ and $\mathbb N$ are functors of ${\mathcal R}$-modules, we will denote by ${\mathbb Hom}_{\mathcal R} (\mathbb M, \mathbb N)$ the functor of ${\mathcal R}$-modules $${\mathbb Hom}_{\mathcal R} (\mathbb M,\mathbb N)(S) := {\rm Hom}_{\mathcal S} (\mathbb M_{|S}, \mathbb N_{|S}),$$ where $\mathbb M_{| S}$ is the functor $\mathbb M$ restricted to the category of commutative $S$-algebras. We denote $\mathbb M^* = {\mathbb Hom}_{\mathcal R} (\mathbb M, {\mathcal R})$ and given an $R$-module $M$, then $${\mathcal M^*}(S)={\rm Hom}_S (M \otimes_R S,S) = {\rm Hom}_R (M,S).$$ We will say that ${\mathcal M}^*$ is an $R$-module scheme and it holds that ${\mathcal M}^{**} = {\mathcal M}$ (\cite[1.10]{Amel}).

\begin{com}
Every morphism $M^* \to M'^*$ considered in this paper will be a functorial morphism, that is, we will actually have a morphism ${\mathcal M}^* \to {\mathcal M}'^*$. Observe that the dual morphism is a morphism ${\mathcal M}' \to {\mathcal M}$, which is induced by a morphism $M'\to M$. Then $M^* \to M'^*$ is the dual morphism of $M' \to M$.
\end{com}

Let $G={\rm Spec}\, A$ be an affine $R$-group and let $G^{\cdot}$
be the functor of points of $G$, that is, $G^{\cdot}(S) = {\rm
Hom}_{R-alg} (A,S)$. ${\mathcal A}^*$ is an $R$-algebra scheme, that is, besides from being an $R$-module scheme it is a functor of algebras (${\mathcal A^*}(S)$ is a $S$-algebra such that $S \subseteq Z({\mathcal A^*}(S))$). One has a natural morphism of functors of monoids $G^{\cdot} \to {\mathcal A}^*$, because $G^{\cdot}(S) = {\rm Hom}_{R-alg} (A,S) \subset {\rm Hom}_R (A,S) = {\mathcal A}^*(S)$. If $\mathbb M^*$ is a functor of ${\mathcal R}$-modules (resp. ${\mathcal R}$-algebras), any morphism of functors of sets (resp. monoids) $G^{\cdot} \to \mathbb M^*$ factorizes uniquely through a morphism of functors of modules (resp. algebras) ${\mathcal A^*}\to \mathbb M^*$ (\cite[5.3]{Amel}). Therefore, the category of linear (and rational) representations of $G$ is equal to the category of quasi-coherent ${\mathcal A}^*$-modules (\cite[5.5]{Amel}).

\begin{com}
For abbreviation, we sometimes use $g \in G$ or $m \in \mathbb M$ to denote $g \in G^{\cdot}(S)$ or $m \in \mathbb M(S)$ respectively. Given $m \in \mathbb M(S)$ and a morphism of $R$-algebras $S \to S'$, we still denote by $m$ its image by the morphism $\mathbb M(S) \to \mathbb M(S')$.
\end{com}

\begin{defi}
Let $R = K$ be a field. We will say that an affine $K$-group scheme $G$ is linearly semisimple if it is linearly reductive, that is, if
every linear representation of $G$ is completely reducible.
\end{defi}

$G$ is linearly semisimple if and only if ${\mathcal  A}^*$ is semisimple, i.e., $A^* = \prod_i A_i^*$, where $A_i^*$ are simple (and finite)
$K$-algebras (\cite[6.8]{Amel}). If $K$ is an algebraically closed field, then $A_i^*={\rm End}_KV_i$, where $V_i$ are the simple $G$-modules (up to isomorphism), by Wedderburn's theorem.

Let  $V_0 = K$ be the trivial representation of $G$. The unit of $A_0^*={\rm End}_KV_0$, $w_G :=
1_0 \in A_0^*\subset A^*$ is said to be the ``invariant integral on $G$''. The invariant
integral on $G$ is characterized by being $G$-invariant and
normalized, that is, $w_G(1)=1$ (\cite[2.8]{Reynolds}).  
Let $\mathbb N=\mathbb M^*$ be a functor of $G$-modules (equivalently, a functor of ${\mathcal A}^*$-modules). The morphism $w_G \cdot: \mathbb N \to \mathbb N, n \mapsto w_G \cdot n$ is the unique projection of $G$-modules of $\mathbb N$ onto $\mathbb N^G$, and $\mathbb N^G = w_G \cdot \mathbb N$ (see \cite[2.3, 3.3]{Reynolds}).

\section{Trace form}

Let us assume that $R = K$ is a field.

Let ${\mathcal A}^*$ be a $\mathcal K$-algebra scheme and let $\tilde{A} = \{ w \in A^* \colon \dim_K A^* \cdot w < \infty, \, \dim_K w \cdot A^* < \infty\}$. It holds that $\tilde{A}$ is a bilateral ideal of $A^*$.

\begin{com}
Functorially, $\tilde{\mathcal A}$ is the maximal $\mathcal K$-quasi-coherent
bilateral ideal ${\mathcal I} \subset {\mathcal A}^*$: Given $w \in
\tilde{A}$, consider the morphism of ${\mathcal A}^*$-modules $f
\colon {\mathcal A}^* \to {\mathcal A}^*$, $f(w') := w' \cdot w$.
Let ${\mathcal B}^* = {\rm Im}\, f = {\mathcal A}^*/\ker f$. Observe
that $B^* = A^* \cdot w$, then $\dim_K B^* < \infty$ and ${\mathcal
B}^*$ is a $\mathcal K$-quasi-coherent module. Since $B^* \subset \tilde{A}$, then ${\mathcal B}^* \subset \tilde{\mathcal A}$ and $\mathcal
{\tilde{A}}$ is a left ${\mathcal A}^*$-submodule of ${\mathcal
A}^*$. Likewise, ${\mathcal{\tilde{A}}}$ is a right ${\mathcal
A}^*$-submodule of ${\mathcal A}^*$. Hence, ${\mathcal{\tilde{A}}}
\subseteq {\mathcal I}$. If ${\mathcal M}$ is a $\mathcal K$-quasi-coherent
left ${\mathcal A}^*$-module (resp. right ${\mathcal A}^*$-module)
and $m \in M$, then $\dim_K A^* \cdot m < \infty$ (resp. $\dim_K m
\cdot A^* < \infty$) by \cite[4.7]{Amel}. Now it follows easily that
${\mathcal I} = {\mathcal{\tilde{A}}}$.
\end{com}

\begin{defi} Let $\mathcal A^*$ be a $\mathcal K$-algebra scheme. Let $tr \colon \tilde{\mathcal A} \to \mathcal K$ be defined by $tr(\tilde{w}) :=$  trace of the endomorphism of $\tilde{A}$, $\tilde{w}' \mapsto \tilde{w} \tilde{w}'$. We will say that $$\left\langle -,- \right\rangle\colon \mathcal A^*\times \tilde{\mathcal A}\to \mathcal K,\,\, \left\langle w,\tilde{w} \right\rangle := tr(w \tilde{w})$$ is
the trace form of $\mathcal A^*$. \end{defi}

 We have the associated ``polarities'' to the trace form 
 $$  \varphi \colon \mathcal A^* \to \tilde{\mathcal A}^*,\,\varphi(w) := \left\langle w,- \right\rangle, \qquad\phi \colon \tilde{\mathcal A} \to \mathcal A^{**}=\mathcal A,\, \phi(\tilde{w}) := \left\langle -,\tilde{w}\right\rangle.$$

The dual morphism  $\varphi^* \colon \mathcal A^* \to \tilde{\mathcal A}^*$ of $\varphi$ is equal to  $\phi$:
$$\varphi^*(w)(\tilde w)=w(\varphi(\tilde w))=\phi(\tilde w)(w)=\langle w,\tilde w\rangle
=\varphi(w)(\tilde w)$$
Likewise, the dual morphism of $\phi$ is $\varphi$.

\begin{defi} Let $G={\rm Spec}\, A$ an affine $K$-group scheme. We will say that the
trace form of $\mathcal A^*$ is the trace form of $G$.\end{defi}

Let $C$ be a $K$-algebra ($K \subseteq Z(C)$). If $M$ is a left $C$-module (resp. a right $C$-module), then $M^*={\rm Hom}_K(M,K)$ is a right $C$-module (resp. a left $C$-module): $( w \cdot c)(m) := w(c \cdot m)$ (resp. $(c \cdot w)(m) := w(m \cdot c)$) for all $w \in M^*$, $m \in M$ and $c \in C$.

The morphism $\varphi\colon \mathcal A^*\to \tilde{\mathcal A}^*$ is a morphism of left and right $\mathcal A^*$-modules:
$$\aligned \varphi(w_1w_2)(\tilde w) & =tr(w_1w_2\tilde w)=tr(w_2\tilde ww_1)=\varphi(w_2)(\tilde ww_1)=(w_1\varphi(w_2))(\tilde w)\\
\varphi(w_1w_2)(\tilde w) & =tr(w_1w_2\tilde w)=\varphi(w_1)(w_2\tilde w)=(\varphi(w_1)w_2)(\tilde w)\endaligned$$
for all $w_1,w_2\in \mathcal A^*$ and $\tilde w\in\tilde{\mathcal A}$. Likewise, 
$\phi$ is a morphism of left and right $\mathcal A^*$-modules.

If $A^* = \prod_i A_i^*$, where $A_i^*$ are finite $K$-algebras, then $\tilde{A} = \oplus_i A_i^*$. In this case we have the morphism $\phi \colon \tilde{A} = \oplus A^*_i \to \oplus_i A_i=A$. Given $1_i := (0,\ldots,\overset{i}{1}, \ldots, 0) \in \tilde{A}$, then $\phi(1_i) = tr_{A_i^*}$. Obviously, $\tilde{A}$ is an $A^*$-module generated by $\left\{ 1_i \right\}_i$, and $$ \left\langle 1_i,1_j \right\rangle  = tr(1_i \cdot 1_j) = \left\{ \begin{array}{ll} \dim_K A^*_i & \text{ if }  i =j \\ 0 & \text{ otherwise } \end{array} \right.$$

\begin{eje}
Let $G_m= {\rm Spec}\, \mathbb{C}[x,1/x]$ be the multiplicative group. It holds that $\mathbb{C}[x,1/x]^* = \prod_{\mathbb{Z}} \mathbb{C}$, $\widetilde{\mathbb{C}[x,1/x]}= \oplus_{\mathbb{Z}} \mathbb{C}$ and $$\phi \colon \widetilde{\mathbb{C}[x,1/x]} = \underset{\mathbb{Z}}{\oplus} \mathbb{C} \to \underset{n \in \mathbb{Z}}{\oplus} \mathbb{C} \cdot x^n = \mathbb{C}[x,1/x],\quad \phi((a_n)_{n \in \mathbb{Z}}) = \sum_n a_nx^n.$$

Let $G_a = {\rm Spec}\, \mathbb{C}[x]$ be the additive group. It holds that $\mathbb{C}[x]^* = \mathbb{C}[[z]]$ ($z^n(x^m) = \delta_{nm} \cdot n!$) and $\widetilde{\mathbb{C}[[z]]} = 0$.
\end{eje}

Let $V$ be a linear representation of an affine $K$-group scheme $G={\rm Spec}\,
A$. The associated character $\chi_V \in A$ is defined by
$\chi_V(g)=$ trace of the linear endomorphism $V \to V$, $v
\mapsto g \cdot v$, for every $g \in G$ and $v \in V$.

Let $G={\rm Spec}\, A$ be a linearly semisimple affine $K$-group scheme. One has that $A^* = \prod_{i} A_i^*$, where $A_i^*$ are finite simple algebras. Assume for simplicity that $K$ is an algebraically closed field, then $A_i^* = {\rm End}_K (V_i)$. Observe that $tr_{A_i^*} = n_i \cdot \chi_{V_i}$, where $n_i = \dim_K V_i$.

Let  $V_0 = K$ be the trivial representation of $G$ and let $w_G :=
1_0 \in A^*$ be the ``invariant integral on $G$''.

Given $a \in A$, then $w_G \cdot a \in K = A^G$. Hence $w_G \cdot a = (w_G \cdot a)(1)= a(w_G) = w_G(a)$.

One has that $w_G \cdot A_j= 1_0 \cdot A_j=0$, if $j \neq 0$ and $w_G \cdot a_0 = a_0$ for all $a_0 \in A_0$. Hence, $w_G \cdot \chi_{V_j} = 0$ if $V_j$ is not the trivial representation, and $w_G \cdot \chi_{V_0} = \chi_{V_0} = 1$. Moreover, since $\chi_{V \oplus V'} = \chi_V + \chi_{V'}$, one has
\begin{equation}
w_G(\chi_V) = w_G \cdot \chi_V = \dim_K V^G .
\end{equation}

\section{Characterization of linearly semisimple groups}

Let $G={\rm Spec}\, A$ be an affine $K$-group scheme.

Let $* : A \to A$, $a \mapsto a^*$ be the morphism induced  by the
morphism $G \to G$, $g \mapsto g^{-1}$. In the following proof, if $V_i$ is a linear representation of $G$, we will consider $V_i^*$ as a left $G$-module by $(g*w)(v) = w(g^{-1}\cdot v)$. One has that $\chi_{V_i}^* = \chi_{V_i^*}$, because the trace of $g^{-1} \in G$ operating on $V_i$ is equal to the trace of $g$ operating on $V_i^*$ (which operates by the transposed inverse of $g$).

\begin{teo}\label{Parseval}
Let $G = {\rm Spec}\, A$  be a linearly semisimple affine $K$-group scheme and let $w_G \in A^*$ be its invariant integral. Let
$$F \colon A \to A^*,\, \, F(a) := w_G(a^* \cdot -)$$
be the ``Fourier Transform" (where $w_G (a^* \cdot -)(a') := w_G(a^* \cdot a')$). Then $F(A)= \tilde{A}$ and $F$ is equal to the inverse morphism of the polarity $\phi$ associated to the trace form of $G$.
\end{teo}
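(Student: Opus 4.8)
The plan is to reduce everything to the Wedderburn decomposition of $\mathcal A^*$ and a Schur-orthogonality computation. Since $G$ is linearly semisimple we have $A^*=\prod_i A_i^*$ with $A_i^*$ finite simple, $\tilde A=\oplus_i A_i^*$ and $A=\oplus_i A_i$ with $A_i=(A_i^*)^*$; I may assume $K$ algebraically closed (as in the preceding discussion, the general case only adds division-algebra and reduced-trace bookkeeping), so that $A_i^*=\mathrm{End}_K(V_i)$ and $n_i=\dim_K V_i$. Fixing matrix units $e^i_{kl}$ of each $A_i^*$ and the dual basis $f^i_{kl}$ of $A$ (the matrix coefficients of $V_i$), the whole statement reduces to evaluating $F$ and $\phi$ on these bases and checking that they are mutually inverse. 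Recall also that $\phi$ and $\varphi$ are dual to one another and that $\phi$ is injective for linearly semisimple $G$, so it will be enough to exhibit $F$ as a two-sided inverse of $\phi$ on basis elements.

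First I would compute $F$ on matrix coefficients. Because $A$ is commutative, $F(f^j_{pq})(f^i_{kl})=w_G((f^j_{pq})^*f^i_{kl})=w_G(f^i_{kl}\,(f^j_{pq})^*)$, i.e.\ the invariant integral of a product of a matrix coefficient of $V_i$ with one of the dual module $V_j^*$. This product is a matrix coefficient of the $G$-module $V_i\otimes V_j^*$, so $w_G$ extracts its $G$-invariant part, and $(V_i\otimes V_j^*)^G=\mathrm{Hom}_G(V_j,V_i)$ is $0$ for $i\neq j$ and one-dimensional (spanned by the identity) for $i=j$ by Schur's lemma. Carrying this out yields the orthogonality relation
$$w_G\big(f^i_{kl}\,(f^j_{pq})^*\big)=\tfrac{1}{n_i}\,\delta_{ij}\delta_{lp}\delta_{kq},$$
whence $F(f^j_{pq})=\tfrac{1}{n_j}e^j_{qp}\in A_j^*$. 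As $\{e^j_{qp}\}$ is a basis of $\tilde A=\oplus_j A_j^*$, this already gives $F(A)=\tilde A$ and that $F\colon A\to\tilde A$ is a linear isomorphism. I expect this Schur-orthogonality step — identifying $w_G$ on coefficients of $V_i\otimes V_j^*$ with projection onto $(V_i\otimes V_j^*)^G$ and pinning down the constant $1/n_i$ — to be the main obstacle, since it is where linear semisimplicity (simplicity of the $A_i^*$, hence Schur's lemma) is genuinely used and where the abstract integral $w_G=1_0$ must be turned into an explicit evaluation.

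It remains to compute $\phi$ and compose. Using $w(\phi(\tilde w))=\phi(\tilde w)(w)=tr(w\tilde w)$ for all $w\in A^*$, I evaluate $\phi$ on matrix units: for $w=e^i_{kl}$ one has $e^i_{kl}(\phi(e^j_{qp}))=tr(e^i_{kl}e^j_{qp})=\delta_{ij}\delta_{lq}\,tr(e^i_{kp})$, and computing the trace of left multiplication by $e^i_{kp}$ on $\tilde A$ gives $tr(e^i_{kp})=n_i\delta_{kp}$. Hence $\phi(e^j_{qp})=n_j f^j_{pq}$, so $\phi(F(f^j_{pq}))=\tfrac{1}{n_j}\phi(e^j_{qp})=f^j_{pq}$; that is, $\phi\circ F=\mathrm{id}_A$ on a basis, hence everywhere. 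Combined with the bijectivity of $F$ from the previous step (equivalently, with the injectivity of $\phi$), this shows $F=\phi^{-1}$.

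Finally, I would remark on two points. For non-algebraically-closed $K$ the same argument runs with $A_i^*=\mathrm{End}_{D_i}(V_i)$, replacing ordinary traces by reduced traces and matrix units by a basis adapted to $D_i$; alternatively one base-changes to $\bar K$, the only care being the behaviour of the (possibly infinite) product decomposition. A more conceptual route avoiding bases would be to prove directly the single Plancherel-type identity $tr(w\cdot F(a))=w(a)$ for all $w\in A^*$ and $a\in A$: granting $F(a)\in\tilde A$, this says exactly $w(\phi(F(a)))=w(a)$ for all $w$, i.e.\ $\phi\circ F=\mathrm{id}_A$, and the inclusion $F(A)\subseteq\tilde A$ together with the injectivity of $\phi$ then forces $F(A)=\tilde A$ and $F=\phi^{-1}$; the work is again concentrated in establishing this identity, which unwinds to the same orthogonality computation.
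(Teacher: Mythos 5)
Your proposal is correct, but it takes a genuinely different route through the middle of the argument than the paper does. The paper first shows that $F$ is a morphism of left and right $G$-modules (using the invariance of $w_G$), hence of $A^*$-bimodules, which already gives $F(A)\subseteq\tilde A$; it then observes that $\tilde A\simeq A$ as $A^*$-bimodules and that ${\rm Hom}_{A^*\otimes A^*}(\tilde A,\tilde A)=Z(A^*)$, so that $F\circ\phi$ must be multiplication by a central element $z=(z_i)$, and finally pins down $z_i=1$ using only the character-level identity $F(\chi_{V_i})(\chi_{V_j})=w_G(\chi_{V_i^*\otimes V_j})=\dim_K{\rm Hom}_G(V_i,V_j)=\delta_{ij}$, which is immediate from the formula $w_G(\chi_W)=\dim_K W^G$ established in Section 3. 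You replace this bimodule-center argument by a full entry-wise Schur orthogonality computation: you determine $F$ and $\phi$ explicitly on the bases of matrix coefficients and matrix units, namely $F(f^j_{pq})=\tfrac{1}{n_j}e^j_{qp}$ and $\phi(e^j_{qp})=n_jf^j_{pq}$ (both formulas are correct, as is your trace computation $tr(e^i_{kp})=n_i\delta_{kp}$), so that $F(A)=\tilde A$, the bijectivity of $F$, and $\phi\circ F={\rm id}_A$ all drop out of the computation; in particular you never need the equivariance of $F$ nor the bimodule lemma. What the paper's route buys is coordinate-freeness: only the trace of the orthogonality relations is needed, and the unknown is reduced a priori to one scalar per simple factor. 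What your route buys is explicit formulas and a self-contained verification, at the price that ``carrying out'' the orthogonality relation requires identifying the unique $G$-equivariant projection ${\rm End}_K(V_i)\to K\cdot{\rm id}$ as $T\mapsto \tfrac{tr(T)}{n_i}\,{\rm id}$ --- a finer fact than the character identity, and one which (like the paper's own division by $n_i$) silently uses that $n_i$ is invertible in $K$; this does hold for linearly reductive $G$, but deserves a word in characteristic $p$.

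Two small blemishes, neither fatal. First, your opening ``recall that $\phi$ is injective for linearly semisimple $G$'' is not available at this point of the paper (there, injectivity of $\phi$ is a \emph{consequence} of this theorem), so relying on it would be circular; fortunately your actual argument never uses it, since you obtain bijectivity of $F$ directly from the basis computation. Second, your reduction to algebraically closed $K$ is only sketched, but the paper's own proof simply assumes $K=\bar K$ at the same point, so you are no worse off there.
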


\begin{proof}
First let us prove that $F:{\mathcal A} \to {\mathcal A}^* ,\, F(a) := w_G(a^* \cdot -)$ is a morphism of left $G$-modules. For every point $g \in G$, $$\aligned w_G((g\cdot a)^*\cdot -)& \overset*=w_G((a^*\cdot g^{-1})\cdot -) \overset{**}=w_G(((a^*\cdot g^{-1})\cdot -)\cdot g)= w_G(a^*\cdot (-\cdot g)) \\ & = g\cdot (w_G(a^*\cdot -))\endaligned$$ where $\overset{*}{=}$ is due to $(g \cdot a)^*(g') = a({g'}^{-1} \cdot g) = a((g^{-1} \cdot g')^{-1}) = (a^* \cdot g^{-1}) (g')$, and $\overset{**}{=}$ is due to $g \cdot w_G = w_G$. Likewise, $F$ is a morphism of right $G$-modules. Then $F$ is a morphism of left and right $A^*$-modules and $F(A)\subseteq \tilde A$.

Assume $K$ is an algebraically closed field. Then $A^* = \prod_i A_i^*$, where $A_i^*= {\rm End}_K(V_i) = {\rm M}_{n_i}(K)$. Observe that $M_{n_i}(K) \simeq M_{n_i}(K)^*$ as left and right $M_{n_i}(K)$-modules. Then $\tilde{A} =\oplus_i M_{n_i}(K)^*$ and $A = \oplus_i M_{n_i}(K)$ are isomorphous as left and right $A^*$-modules. Let us see that $F \circ \phi= z\cdot $, where $z=(z_i) \in \prod_i K=Z(A^*)$:
$$F \circ \phi \in {\rm Hom}_{A^* \otimes A^*} (\tilde{A},\tilde{A}) = {\rm Hom}_{A^* \otimes A^*}(A,A) \subseteq {\rm Hom}_{A^* \otimes A^*} (A^*,A^*) = Z(A^*).$$

As $F(tr_{A_i^*}) = F(\phi(1_i)) = z \cdot 1_i =z_i\cdot 1_i$, then $F(\chi_{V_i}) = (z_i/n_i) \cdot 1_i$. Observe that
$$F(\chi_{V_i}) (\chi_{V_j}) = w_G (\chi_{V_i^* \otimes V_j}) = w_G \cdot \chi_{V_i^* \otimes V_j} = \dim_K {\rm Hom}_G (V_i,V_j) = \delta_{ij}.$$ Hence, $1 = F(\chi_{V_i})(\chi_{V_i}) = ((z_i/n_i) \cdot 1_i) (\chi_{V_i}) = (z_i/n_i) \cdot n_i = z_i$ and $F \circ \phi = {\rm Id}$. Then $\phi$ is an injective morphism. Since $\phi(A^*_i) \subseteq A_i$, then $\phi(A^*_i) = A_i$ and $\phi \colon \tilde{A} = \oplus_i A^*_i \to \oplus_i A_i = A$ is an isomorphism. As a consequence, $\phi \circ F={\rm Id}$.
\end{proof}

\begin{teo} Let 
$G={\rm Spec}\, A$ be an affine $K$-group scheme. $G$ is linearly semisimple  if and only if the polarity $\phi \colon \tilde{A} \to A$ associated to the trace form of $G$ is an isomorphism.
\end{teo}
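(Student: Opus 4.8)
The plan is to prove the two implications separately. The forward implication will be immediate from Theorem~\ref{Parseval}, while the converse is the substantive part and will proceed by constructing the invariant integral directly out of $\phi$ and invoking the criterion of \cite{Sw}.

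For the implication ``$G$ linearly semisimple $\Rightarrow$ $\phi$ is an isomorphism'' I would simply appeal to Theorem~\ref{Parseval}: it asserts that for a linearly semisimple $G$ the Fourier transform $F\colon A\to A^*$ lands in $\tilde A$ and equals the inverse of $\phi$, whence $\phi$ is an isomorphism with $\phi^{-1}=F$. The one point worth a remark is that Theorem~\ref{Parseval} is established after reduction to $K=\bar K$; to cover an arbitrary base field I would observe that linear semisimplicity, the formation of $\tilde A$ (which under semisimplicity is $\oplus_i A_i^*$) and the morphism $\phi$ are all compatible with the extension $K\hookrightarrow\bar K$, so that $\phi_{\bar K}=\phi\otimes_K\bar K$ is an isomorphism and $\phi$ itself is one by faithfully flat descent.

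For the converse, assume $\phi\colon\tilde A\to A$ is an isomorphism and set $w_G:=\phi^{-1}(1)\in\tilde A\subseteq A^*$, where $1\in A=A^{**}$ denotes the functional $w\mapsto w(1)$. The goal is to show that $w_G$ is a normalized invariant integral. First I would establish invariance. Since $\phi$ is a morphism of left and right $A^*$-modules and the left (resp.\ right) action of $w\in A^*$ on $1\in A^{**}$ is $w\cdot 1=w(1)\cdot 1$ (resp.\ $1\cdot w=w(1)\cdot 1$) — because evaluation $v\mapsto v(1)$ at the unit $1\in A$ is the counit $1_{A^*}$, hence an algebra homomorphism $A^*\to K$ — the chain $\phi(w\,w_G)=w\cdot\phi(w_G)=w\cdot 1=w(1)\cdot 1=\phi(w(1)\,w_G)$ together with injectivity of $\phi$ yields $w\,w_G=w(1)\,w_G$, and symmetrically $w_G\,w=w(1)\,w_G$, for all $w\in A^*$. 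In particular $g\cdot w_G=g(1)\,w_G=w_G$ for every point $g$ (an algebra map sends $1$ to $1$), so $w_G\in (A^*)^G$.

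It then remains to normalize, i.e.\ to show $w_G(1)=1$, which is where the trace form enters twice. On one hand, the identity $\phi(w_G)=1$ reads $tr(w\,w_G)=w(1)$ for every $w\in A^*$; taking $w=1_{A^*}$ gives $tr(w_G)=1_{A^*}(1)=1$. On the other hand, by the right invariance just proved, left multiplication by $w_G$ on $\tilde A$ is the rank-one endomorphism $\tilde w'\mapsto w_G\tilde w'=\tilde w'(1)\,w_G$, whose trace is $w_G(1)$; by definition this trace is $tr(w_G)$. Comparing the two values forces $w_G(1)=1$, so $w_G$ is a normalized invariant integral and $G$ is linearly semisimple by \cite{Sw}. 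The main obstacle I anticipate is not any isolated computation but keeping the transported $A^*$-bimodule structure on $A=A^{**}$ under control: every invariance and trace identity rests on correctly identifying how $A^*$ acts on $1\in A^{**}$ and on the fact that evaluation at $1\in A$ is the algebra unit $1_{A^*}$ (equivalently $\Delta 1=1\otimes 1$). Once these identifications are pinned down the argument is essentially formal, the only external input being the criterion of \cite{Sw}.
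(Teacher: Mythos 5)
Your proof is correct and takes essentially the same route as the paper's: the forward implication is read off from Theorem~\ref{Parseval}, and for the converse both you and the paper set $w_G:=\phi^{-1}(1)$, use the left and right $A^*$-module property of $\phi$ together with the invariance of $1\in A$ to get $w\cdot w_G=w(1)\,w_G=w_G\cdot w$, and then compare trace computations to produce a normalized invariant integral. The only (harmless) differences are that you pin down $w_G(1)=1$ exactly, where the paper only argues $w_G(1)\neq 0$ and rescales, and that you add a base-change/descent remark covering non-algebraically-closed $K$ in the forward direction.
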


\begin{proof}
Let us assume that $\phi \colon \tilde{A} \simeq A$ is a left and right $A^*$-module isomorphism. Observe that $1 \in A$ is left and right $G$-invariant. Let $w_G = \phi^{-1}(1)$. Given $w \in A^*$, $w \cdot w_G = w(1) w_G = w_G \cdot w$. Then it is easy to check that
$tr(w_G) = w_G(1)$ and in general $\left\langle w_G,w' \right\rangle = w'(1)w_G(1)$. As $\left\langle w_G,- \right\rangle = \phi(w_G)\neq 0$, therefore $w_G(1)\neq 0$. Then, $w'= w_G / w_G(1)$ is a normalized invariant integral on $G$ and $G$ is linearly semisimple.

Reciprocally, if $G$ is linearly semisimple, by the previous theorem $\phi$ is an isomorphism, whose inverse morphism is the Fourier transform.
\end{proof}

\begin{defi}
We will say that the trace form of $G={\rm Spec}\, A$ is non-degenerate on $A^*$ if 
the associated polarity, $\varphi \colon A^* \to \tilde{A}^*$, $\varphi(w) := \langle w,- \rangle$, is an injective morphism.
\end{defi}

\begin{teo} Let 
$G={\rm Spec}\, A$ be an affine $K$-group scheme. $G$ is  linearly semisimple if and only if its trace form is non-degenerate on $A^*$.
\end{teo}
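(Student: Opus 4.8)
The plan is to treat the two implications separately, reducing the ``only if'' part to the preceding theorem by duality and proving the ``if'' part by a nilpotency computation for the trace.

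For the \emph{only if} direction, suppose $G$ is linearly semisimple. By the previous theorem the polarity $\phi\colon\tilde A\to A$ is an isomorphism of left and right $A^*$-modules. Since $\varphi$ is exactly the dual morphism of $\phi$ (as recorded above, $\phi^*=\varphi$), and dualization $\mathcal M\rightsquigarrow\mathcal M^*$ is an anti-equivalence carrying isomorphisms to isomorphisms (using $\mathcal M^{**}=\mathcal M$), the morphism $\varphi=\phi^*$ is itself an isomorphism; in particular it is injective, i.e.\ the trace form is non-degenerate on $A^*$. Alternatively one argues directly: $A^*=\prod_iA_i^*$ with $A_i^*=M_{n_i}(K)$, $\tilde A=\oplus_iA_i^*$, the trace form is block-diagonal, and on each block it equals $n_i$ times the ordinary matrix trace form, which is non-degenerate because $n_i$ is prime to ${\rm char}\,K$ for a linearly semisimple group.

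For the \emph{if} direction I would show that the Jacobson radical ${\rm rad}(A^*)$ is contained in $\ker\varphi$, so that injectivity of $\varphi$ forces ${\rm rad}(A^*)=0$. Fix $w\in{\rm rad}(A^*)$ and $\tilde w\in\tilde A$, and put $x:=w\tilde w\in{\rm rad}(A^*)\cap\tilde A$. By definition $\langle w,\tilde w\rangle=tr(x)$ is the trace of left multiplication $L_x$ on $\tilde A$. The key reduction is that this trace is computed on a finite-dimensional piece: the two-sided ideal $N:=A^*\tilde wA^*\subseteq\tilde A$ is finite-dimensional, it is $L_x$-stable, and it contains the image of $L_x$ (since $x\tilde A\subseteq xA^*\subseteq N$), whence $tr(x)=tr(L_x|_N)$. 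Now $L_x|_N$ is the image of $x$ under the representation $\rho\colon A^*\to{\rm End}_K(N)$, whose image $C:=\rho(A^*)$ is a finite-dimensional $K$-algebra; as $w\in{\rm rad}(A^*)$ and the radical maps into the radical of any quotient, $\rho(x)\in{\rm rad}(C)$, which is nilpotent, so $tr(L_x|_N)=0$. Hence $\langle w,\tilde w\rangle=0$ for every $\tilde w\in\tilde A$, i.e.\ $w\in\ker\varphi$, giving ${\rm rad}(A^*)\subseteq\ker\varphi$.

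It then remains to deduce linear semisimplicity from ${\rm rad}(A^*)=0$, and this is the step I expect to be the main obstacle: for an arbitrary ring ``radical zero'' (semiprimitivity) is strictly weaker than semisimplicity, so one must use that $A^*$ is the dual algebra of the coalgebra $A$. In that setting ${\rm rad}(A^*)$ is the orthogonal of the coradical of $A$, and its vanishing is equivalent to $A$ being cosemisimple, i.e.\ to $A^*=\prod_iA_i^*$ with $A_i^*$ finite simple, which by \cite[6.8]{Amel} is precisely the linear semisimplicity of $G$. Care is also needed in the radical computation to verify that the trace over the possibly infinite-dimensional ideal $\tilde A$ genuinely reduces to the finite-dimensional trace on $N$.
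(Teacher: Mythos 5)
Your ``only if'' half coincides with the paper's: both invoke the preceding theorem to get that $\phi\colon\tilde A\to A$ is an isomorphism and then dualize ($\varphi=\phi^*$, ${\mathcal M}^{**}={\mathcal M}$) to conclude that $\varphi$ is an isomorphism, in particular injective. (Your aside that one could instead check non-degeneracy block-by-block is the one weak point: that every $n_i$ is prime to ${\rm char}\,K$ for a linearly reductive group is not available as a free input---it is essentially a consequence of the theorem being proved, or requires Nagata's classification---so the duality argument should remain the main line.) Your ``if'' half is genuinely different from the paper's. The paper argues entirely inside its duality formalism: injectivity of the module-scheme morphism $\varphi$ dualizes to surjectivity of the quasi-coherent morphism $\phi\colon\tilde A\to A$, and the commutative square $i^*\circ\phi=\varphi\circ i$ (where $i\colon\tilde A\hookrightarrow A^*$ is the inclusion) shows $\phi$ is also injective, hence an isomorphism, and the preceding theorem finishes; no ring theory beyond that theorem is used. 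You instead prove ${\rm rad}(A^*)\subseteq\ker\varphi$: the finite-rank trace reduction to $N=A^*\tilde w A^*$ is sound (finite-dimensionality of $N$ is exactly the \cite[4.7]{Amel}-type finiteness the paper records in Section 5), and nilpotence of the radical's image in the finite-dimensional algebra $\rho(A^*)\subseteq{\rm End}_K(N)$ kills the trace. Then, as you correctly flag, semiprimitivity of an abstract ring is weaker than semisimplicity, and you close the gap with the coalgebra fact ${\rm rad}(A^*)=A_0^{\perp}$ ($A_0$ the coradical of $A$), so ${\rm rad}(A^*)=0$ forces $A$ cosemisimple, i.e. $A^*=\prod_i A_i^*$ with $A_i^*$ finite simple, which is linear semisimplicity by \cite[6.8]{Amel}. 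This is a valid alternative route: it buys extra information---$\ker\varphi$ contains the Jacobson radical, making explicit the link between non-degeneracy of the trace form and semiprimitivity---at the cost of importing coradical theory (standard, e.g. in Sweedler's book, but external to this paper), whereas the paper's diagram chase is shorter, self-contained in its scheme-duality framework, and reuses the previous theorem directly.
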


\begin{proof}
If $\varphi \colon A^* \to \tilde{A}^*$ is injective, then
dually $\phi \colon \tilde{A} \to A$ is surjective. Let us denote by $i \colon \tilde{A} \to A^*$ the natural inclusion. From the commutative diagram

$$\xymatrix{ \tilde{A} \ar@{->>}[r]^-{\phi} \ar@{^{(}->}[d]^-i & A  \ar[d]^-{i^*} \\ A^* \ar@{^{(}->}[r]^-\varphi & \tilde{A}^*}$$
it follows that $\phi \colon \tilde{A} \to A$ is an isomorphism and we have that $G$ is linearly semisimple.

Reciprocally, if $G$ is linearly semisimple, by the previous theorem $\phi \colon \tilde{A} \to A$ is an isomorphism, then dually $\varphi \colon A^* \to \tilde{A}^*$ is an isomorphism.
\end{proof}

Given a vector space $V$, on $V^*$ we will consider the topology whose closed sets are $\{ V'^0 = (V/V')^*\}_{V' \subset V}$. Given a vector subspace $W \subseteq V^*$ we denote $W^0 = \{ v \in V \colon v(W) = 0 \}$. The closure of $W$ in $V^*$ coincides with
$W^{00}$. (Functorially, the closure of ${\mathcal W}$ in ${\mathcal V}^*$ is the minimum $K$-scheme of submodules of ${\mathcal V}^*$ which contains ${\mathcal W}$). The closure of $\tilde{A}$ in $A^*$ is a bilateral ideal.

\begin{defi}
We will say that the trace form of $G={\rm Spec}\, A$ is non-degenerate on $\tilde{A}$ if its associated polarity $\phi \colon \tilde{A} \to \tilde{A}^*$, $\phi(\tilde{w}) = \left\langle -,\tilde{w}\right\rangle $ is an injective morphism.
\end{defi}

\begin{teo}\label{Pontrjagin} Let $G={\rm Spec}\, A$ be an affine $K$-group scheme.
$G$ is a linearly semisimple group if and only if $\tilde{A}$ is dense in $A^*$ and its trace form is non-degenerate on $\tilde A$.
\end{teo}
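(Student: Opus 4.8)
The plan is to reduce everything to the characterization already obtained, namely that $G$ is linearly semisimple if and only if the trace form is non-degenerate on $A^*$, i.e.\ if and only if $\varphi\colon A^*\to\tilde A^*$ is injective. Throughout I write $i\colon\tilde A\hookrightarrow A^*$ for the inclusion, and I first record that the polarity $\phi\colon\tilde A\to\tilde A^*$ of the last Definition is exactly the composite $\varphi\circ i$ (equivalently $i^*\circ\phi$, using cyclicity of the trace): hence the trace form is non-degenerate on $\tilde A$ precisely when $\ker\varphi\cap\tilde A=0$. Thus the whole statement reduces to: $\varphi$ is injective if and only if $\tilde A$ is dense in $A^*$ and $\ker\varphi\cap\tilde A=0$.

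For the direct implication I would assume $G$ linearly semisimple and apply the preceding theorem to get $\varphi$ injective; then $\varphi\circ i$ is injective, so the trace form is non-degenerate on $\tilde A$. For the density I would use the explicit structure $A^*=\prod_iA_i^*$, $\tilde A=\oplus_iA_i^*$, $A=\oplus_iA_i$: density of $\tilde A$ means $\tilde A^0=0$ in $A$, and since the component pairings $A_i^*\times A_i\to K$ are perfect, any $a=(a_i)\in A$ annihilating $\tilde A$ must have every $a_i=0$.

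The substantial direction is the converse. Assume $\tilde A$ is dense in $A^*$ and the trace form is non-degenerate on $\tilde A$, i.e.\ $N:=\ker\varphi$ satisfies $N\cap\tilde A=0$; I must show $N=0$ and then invoke the preceding theorem. Since $\varphi$ is a morphism of left and right $A^*$-modules, $N$ is a bilateral ideal, and as $\tilde A$ is a bilateral ideal too we get $N\cdot\tilde A\subseteq N\cap\tilde A=0$; thus $w\tilde w=0$ for every $w\in N$ and every $\tilde w\in\tilde A$. Now fix $w\in N$ and consider left multiplication $L_w\colon\mathcal A^*\to\mathcal A^*$, $x\mapsto w\cdot x$. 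This is a functorial morphism of $\mathcal K$-module schemes, hence, by the first Remark, the dual of a morphism $A\to A$, and in particular continuous for the topology introduced above; its kernel is therefore closed. Since $L_w$ vanishes on $\tilde A$ and $\tilde A$ is dense, $\ker L_w=A^*$, whence $w=w\cdot 1=L_w(1)=0$. Hence $N=0$, so $\varphi$ is injective and $G$ is linearly semisimple.

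The step I expect to be the crux is the continuity of $L_w$: everything hinges on left multiplication by a fixed element being a functorial, hence dual, hence continuous endomorphism of $\mathcal A^*$, so that its kernel is closed and the density of $\tilde A$ can be brought to bear. The preliminary reduction "$N$ bilateral and $\tilde A$ bilateral $\Rightarrow w\tilde A=0$" is what upgrades the weak hypothesis $\ker\varphi\cap\tilde A=0$ into the usable vanishing $L_w|_{\tilde A}=0$; once that is in place the density argument closes the proof at once.
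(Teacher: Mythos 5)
Your proof is correct, and its skeleton matches the paper's: identify non-degeneracy on $\tilde A$ with $\ker\varphi\cap\tilde A=0$, use density of $\tilde A$ together with closedness to force $\ker\varphi=0$, then invoke the preceding theorem. The genuine difference is in the decisive density step, and there your argument is the more solid one. The paper disposes of it in one line: ``$\ker\varphi$ is closed, $\ker\varphi\cap\tilde A=\ker\phi=0$; taking closure, $\ker\varphi=0$.'' Read as a statement about bare subspaces this is a false principle: a closed subspace can meet a dense subspace only in $0$ without being zero. For instance, in $A^*=K^{\mathbb N}=(K^{(\mathbb N)})^*$ the line $C$ spanned by $(1,1,1,\dots)$ equals $H^0$ for the hyperplane $H=\{a\colon\sum_na_n=0\}$, hence is closed, and $C\cap K^{(\mathbb N)}=0$ even though $K^{(\mathbb N)}$ is dense. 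What makes the step legitimate is exactly the structure you bring in: $\ker\varphi$ and $\tilde A$ are bilateral ideals, so $N\cdot\tilde A\subseteq N\cap\tilde A=0$; left multiplication $L_w$ by $w\in N$ is functorial, hence dual to a morphism $A\to A$, hence has closed kernel; density then gives $L_w=0$ on all of $A^*$, and $w=L_w(1)=0$. So your proof supplies precisely the justification that the paper's ``taking closure'' elides, at the cost of a slightly longer argument. Two minor further differences: you prove the direct implication explicitly (density and non-degeneracy on $\tilde A$ via $A^*=\prod_iA_i^*$, $\tilde A=\oplus_iA_i^*$), which the paper omits as implicit in its other results; and you close the converse by quoting injectivity of $\varphi$ alone, whereas the paper upgrades $\varphi$ to an isomorphism and dualizes back to $\phi\colon\tilde A\to A$ -- both endings work.
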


\begin{proof}
Let us assume that the trace form of $G={\rm Spec}\, A$ is non-degenerate on $\tilde{A}$. Then, the morphism $\phi \colon \tilde{A} \to A$ is injective, hence dually the morphism $\varphi \colon A^* \to \tilde{A}^*$ is surjective. $\ker \varphi$ is a closed set of $A^*$ and $\ker \varphi \cap \tilde{A} = \ker \phi=0$. Taking closure we obtain that $\ker \varphi \cap A^* = 0$, therefore $\ker \varphi=0$.

In conclusion, $\varphi \colon A^* \to \tilde{A}^*$ is an isomorphism, then dually $\phi \colon \tilde{A} \to A$ is an isomorphism and we obtain that $G$ is linearly semisimple.
\end{proof}

\section{Pontrjagin's duality}

Now our aim is to write Theorem \ref{Pontrjagin} in terms of the classical Pontrjagin's duality, which establishes the duality between compact commutative groups and discrete commutative groups.

$\tilde A$ is dense in $A^*$ if and only if for every epimorphism of algebra schemes $\pi \colon {\mathcal A}^* \to {\mathcal B}^*$, where $\dim_K B^*<\infty$, it holds that $\pi(\tilde A)=B^*$. Since $B^*$ injects into ${\rm End}_K (B^*)$, we obtain the following proposition.

\begin{pro}
Let $G = {\rm Spec}\, A$ be an affine $K$-group scheme. $\tilde A$ is dense in $A^*$ if and only if for every linear representation, $G^\cdot \to {\mathbb End}_{\mathcal K} ({\mathcal V})$, where $\dim_K V <\infty$, the image of the induced morphism $A^* \to {\rm End}_K (V)$ coincides with the image of $\tilde A$.
\end{pro}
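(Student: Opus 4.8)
The plan is to unwind both directions through the characterization already given just before the statement, namely that $\tilde A$ is dense in $A^*$ precisely when every finite-dimensional quotient algebra scheme $\pi\colon \mathcal A^*\to \mathcal B^*$ satisfies $\pi(\tilde A)=B^*$. The proposition merely reinterprets this quotient condition representation-theoretically, so the work is entirely in translating ``finite-dimensional quotient algebra'' into ``finite-dimensional linear representation'' and back, via the dictionary between quasi-coherent $\mathcal A^*$-modules and rational representations of $G$ recalled in the preliminaries.

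First I would prove the ``only if'' direction. Suppose $\tilde A$ is dense in $A^*$, and let a finite-dimensional representation be given, i.e.\ a morphism $G^\cdot \to \mathbb{End}_{\mathcal K}(\mathcal V)$ with $\dim_K V<\infty$. By the universal property recalled earlier (\cite[5.3]{Amel}), this factors uniquely through a morphism of algebra schemes $\rho\colon \mathcal A^*\to \mathcal{E}$, where $\mathcal E = \mathbb{End}_{\mathcal K}(\mathcal V)$; on rational points this is the induced map $A^*\to \operatorname{End}_K(V)$. Let $\mathcal B^* := \operatorname{Im}\rho$, a finite-dimensional quotient algebra scheme of $\mathcal A^*$ sitting inside $\operatorname{End}_K(V)$. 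The density hypothesis gives $\rho(\tilde A)=B^*=\rho(A^*)$, which is exactly the assertion that the image of $\tilde A$ in $\operatorname{End}_K(V)$ coincides with the image of all of $A^*$.

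For the ``if'' direction I would run the argument in reverse. Assume every finite-dimensional representation has $\tilde A$ mapping onto the same image as $A^*$. To verify density it suffices, by the preceding characterization, to check $\pi(\tilde A)=B^*$ for an arbitrary epimorphism of algebra schemes $\pi\colon \mathcal A^*\to\mathcal B^*$ with $\dim_K B^*<\infty$. The key observation, already noted in the text, is that $B^*$ embeds into $\operatorname{End}_K(B^*)$ by left multiplication, so composing $\pi$ with this regular representation realizes $\pi$ as (the algebra-scheme map underlying) a finite-dimensional representation $G^\cdot\to \mathbb{End}_{\mathcal K}(\mathcal B^*)$. Applying the hypothesis to $V=B^*$ yields that the image of $\tilde A$ equals the image of $A^*$ inside $\operatorname{End}_K(B^*)$; since the embedding $B^*\hookrightarrow \operatorname{End}_K(B^*)$ is injective and $\pi$ is surjective, this forces $\pi(\tilde A)=\pi(A^*)=B^*$, as required.

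The one point that needs genuine care, rather than routine bookkeeping, is the passage from a bare representation $G^\cdot\to\mathbb{End}_{\mathcal K}(\mathcal V)$ to an honest morphism of algebra schemes whose image on rational points is a finite-dimensional quotient algebra; this is where the functorial machinery (the equivalence between representations of $G$ and quasi-coherent $\mathcal A^*$-modules, and the factorization \cite[5.3]{Amel}) must be invoked to guarantee that the set-theoretic image is itself a quasi-coherent algebra subscheme of finite dimension, so that the density criterion applies verbatim. Once that identification is in place, both implications are immediate, and the proposition is essentially a restatement of the displayed density criterion specialized along the faithful regular representation $B^*\hookrightarrow\operatorname{End}_K(B^*)$.
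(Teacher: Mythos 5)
Your proof is correct and follows exactly the paper's intended argument: the paper derives the proposition from the displayed density criterion (density $\Leftrightarrow$ $\pi(\tilde A)=B^*$ for every finite-dimensional quotient algebra scheme) together with the remark that $B^*$ injects into ${\rm End}_K(B^*)$ via the regular representation, which is precisely the translation you carry out in both directions. Your write-up simply makes explicit the two implications that the paper compresses into the single sentence preceding the proposition.
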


Let us assume that $\tilde A$ is dense in $A^*$. We want to determine the maximal bilateral ideals of $\tilde{A}$.

Let $V$ be a $G$-module. For every $v \in V$, it holds that $v \in \tilde A \cdot v$: the $G$-module $\langle v \rangle$ generated by $v$ is finite-dimensional, then there exists $w \in \tilde A$ such that the morphism $w \colon \langle v \rangle \to \langle v \rangle $, $v'\mapsto w \cdot v'$ is the identity morphism. Then $v = w \cdot v \in \tilde{A} \cdot v$.

In particular, for every $\tilde w \in \tilde A$ it holds that $\tilde w \in \tilde A \cdot \tilde w$.

Let $I \subseteq \tilde{A}$ be an ideal and $w \in A^*$, then $w \cdot I \subseteq I$: $w \cdot I \subseteq \tilde{A} \cdot (w \cdot I ) \subseteq \tilde{A} \cdot I \subseteq I$. Then, every bilateral ideal $I \subset \tilde{A}$ is a left and right $A^*$-submodule, then $\mathcal{I}$ is a left and right ${\mathcal A}^*$-module, by \cite[4.6]{Amel}. If $I \subsetneq \tilde{A}$ is a maximal bilateral ideal, then $\dim_K \tilde{A}/I<\infty$,
because if $\mathcal{M}$ is a left and right quasi-coherent $\mathcal{A}^*$-module and $m \in M$, then the left and right quasi-coherent $\mathcal{A}^*$-module generated by $m$ is finite dimensional, by \cite[4.7]{Amel}.

Let $I \subset \tilde A$ be a bilateral ideal of finite codimension
and $\tilde w \in \tilde A$ such that $\tilde w \cdot \colon \tilde
A/I \to \tilde A/I$ is the identity morphism. Then, $\bar{\tilde w}
\cdot \bar v = \bar v$, for every $\bar v \in \tilde A/I$, that is,
$\bar{\tilde w}$ is a left unit of $\tilde A/I$. Likewise, there
exists a right unit in $\tilde A/I$, then there exists a unit $1 \in
\tilde A/I$. The morphism $\pi \colon {\mathcal A}^* \to \mathcal{
\tilde{A}}/{\mathcal I}$, $\pi(w) = w \cdot 1$ is a surjective
morphism of $\mathcal K$-algebra schemes. If ${\mathcal J^*} = \ker \pi$, then $A^*/J^* = \tilde A/I$. Hence, the algebra scheme closure of ${\mathcal{\tilde{A}}}$ is ${\mathcal A}^*$: If ${\mathcal B}^*$ is an algebra scheme, then ${\mathcal B}^* = \underset{\underset{i}{\leftarrow}}{\lim} \, {\mathcal B}_i^*$ is an inverse limit of algebra schemes ${\mathcal B}_i^*$, where $\dim_K B_i<\infty$, by \cite[4.12]{Amel}. Then
$$\aligned {\rm Hom}_{\mathcal K-alg} (\mathcal{\tilde{A}},{\mathcal B}^*) &
= \underset{\underset{i}{\leftarrow}}{\lim}\, {\rm Hom}_{\mathcal K-alg}
(\mathcal{\tilde{A}},{\mathcal B}_i^*) =
\underset{\underset{i}{\leftarrow}}{\lim} \, {\rm
Hom}_{\mathcal K-alg}({\mathcal A}^*,{\mathcal B}_i^*)\\ & = {\rm
Hom}_{\mathcal K-alg}({\mathcal A}^*,{\mathcal B}^*). \endaligned$$

Let $G={\rm Spec}\, A$ be an affine $K$-group scheme and let $\hat G$ be the ``dual group'', that is, the set of irreducible representations of $G$, up to isomorphism. If $\tilde A$ is dense in $A^*$, then
$$\aligned {\rm Spec}\, \tilde A  & :=  \{\text{Maximal bilateral ideals $I \subsetneq \tilde A$} \} \\ & = \{ \text{Maximal bilateral ideal schemes $\mathcal{J}^* \subset \mathcal{A}^*$} \} \stackrel{\text{\cite[6.11-12]{Amel}}}{=} \hat G .\endaligned$$

\begin{defi}
We will say that $\hat G$ is discrete if $\tilde A$ is dense in $A^*$ and $\tilde A$ is separable (that is, $\tilde A$ is a direct sum of algebras of matrices by base change to the algebraic closure of $K$).
\end{defi}

Now we can rewrite Theorem \ref{Pontrjagin} as follows.

\begin{teo}
Let $G={\rm Spec}\, A$ be an affine $K$-group scheme. $G$ is linearly semisimple if and only if $\hat G$ is a discrete group.
\end{teo}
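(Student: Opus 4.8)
The plan is to reduce the statement to Theorem \ref{Pontrjagin} together with the definition of discreteness, both of which share the hypothesis that $\tilde A$ is dense in $A^*$. Indeed, Theorem \ref{Pontrjagin} says that $G$ is linearly semisimple if and only if $\tilde A$ is dense in $A^*$ and the trace form is non-degenerate on $\tilde A$, while $\hat G$ is discrete, by definition, exactly when $\tilde A$ is dense in $A^*$ and $\tilde A$ is separable. Thus, modulo density, the entire content is the comparison between ``non-degenerate trace form on $\tilde A$'' and ``$\tilde A$ separable''. I would warn at the outset that these two conditions are \emph{not} equivalent for an arbitrary algebra: as recalled in the introduction, a finite $K$-algebra has non-degenerate trace form if and only if it is separable \emph{and} the dimensions of its simple factors over $\bar K$ are prime to the characteristic. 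Hence the two directions must be argued differently, and the crux is that for an algebra of the form $\tilde A$ the extra characteristic condition comes for free.

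For the implication $G$ linearly semisimple $\Rightarrow$ $\hat G$ discrete, I would argue through the trace form. If $G$ is linearly semisimple then $A^* = \prod_i A_i^*$ with $A_i^*$ finite simple, so $\tilde A = \oplus_i A_i^*$, and by Theorem \ref{Pontrjagin} the trace form is non-degenerate on $\tilde A$ and $\tilde A$ is dense. The key elementary observation is that the trace form of $\tilde A = \oplus_i A_i^*$ is the orthogonal direct sum of the regular trace forms of the finite algebras $A_i^*$: since any $\tilde w, \tilde w'$ have finite support, the multiplication endomorphism $L_{\tilde w \tilde w'}$ of $\tilde A$ is block-diagonal with only finitely many non-zero blocks, so its trace is the sum of the blockwise traces, and $\langle A_i^*, A_j^*\rangle = 0$ for $i \neq j$. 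Consequently the trace form is non-degenerate on each $A_i^*$, and by the classical criterion of the introduction each $A_i^*$ is separable; hence $\tilde A = \oplus_i A_i^*$ is separable and $\hat G$ is discrete.

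For the converse, $\hat G$ discrete $\Rightarrow$ $G$ linearly semisimple, I would deliberately avoid the trace form (separability alone does not guarantee its non-degeneracy) and instead use the structural criterion \cite[6.8]{Amel}. From $\hat G$ discrete, $\tilde A$ is dense and separable; being separable, $\tilde A$ is semisimple, so it decomposes as a direct sum $\tilde A = \oplus_i B_i$ of finite-dimensional simple $K$-algebras, the $B_i$ being the simple quotients $\tilde A / I$ by the maximal bilateral ideals, i.e.\ the points of $\hat G$ as identified above. Taking the algebra-scheme closure and invoking density---the paper has shown that the algebra-scheme closure of $\tilde{\mathcal A}$ is $\mathcal A^*$---yields $A^* = \prod_i B_i$, a product of finite simple $K$-algebras, whence $G$ is linearly semisimple by \cite[6.8]{Amel}. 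A posteriori, Theorem \ref{Pontrjagin} then forces the trace form on $\tilde A$ to be non-degenerate, so the characteristic condition on the sizes of the $B_i$ holds automatically; this is precisely the phenomenon that makes the weaker notion of separability sufficient here.

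The step I expect to be the main obstacle is exactly this asymmetry in the characteristic $p$ case: one must resist the temptation to chain the two characterizations by treating ``separable'' and ``non-degenerate trace form'' as interchangeable. The honest route is to obtain separability from non-degeneracy (via the classical criterion) in one direction, and to obtain linear semisimplicity from separability purely ring-theoretically (via the product-of-simple-algebras criterion \cite[6.8]{Amel}, using density to pass from $\oplus_i B_i$ to $\prod_i B_i$) in the other, letting linear semisimplicity retroactively supply the missing characteristic condition. A secondary technical point to handle carefully is the identification of the algebra-scheme closure of $\oplus_i B_i$ with $\prod_i B_i$ and the legitimacy of summing blockwise traces over the infinite index set; both rest on the finiteness built into $\tilde A$ (each $e \in \tilde A$ has $\dim_K e \cdot A^* < \infty$) and on the closure computation already carried out in the paper.
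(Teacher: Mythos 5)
Your proposal is correct, and in the converse direction it takes a genuinely different route from the paper --- one that in fact repairs a weak point of the paper's own argument. For the direct implication the paper never touches the trace form: it base-changes to $\bar K$, where $G$ is still linearly semisimple, so each factor of $A^*=\prod_i A_i^*$ becomes a sum of matrix algebras over $\bar K$; density and separability of $\tilde A=\oplus_i A_i^*$ are then immediate. Your version (Theorem \ref{Pontrjagin}, orthogonality of the blocks for the trace form of $\tilde A$, and ``non-degenerate $\Rightarrow$ separable'' from the classical criterion) is longer but valid. The real divergence is the converse: the paper's entire argument there is the assertion ``if $\tilde A$ is separable then its trace form is non-degenerate'', followed by Theorem \ref{Pontrjagin}; as you observe, that assertion is false for general algebras in characteristic $p$ (the separable algebra $M_p(K)$, ${\rm char}\, K = p$, has identically zero regular trace form), so the paper's proof of this direction is incomplete unless one already knows that the blocks of $\tilde A\otimes_K\bar K$ have size prime to ${\rm char}\, K$ --- precisely the a posteriori information your argument is designed not to need. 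Your structural alternative --- $\tilde A=\oplus_i B_i$ with $B_i$ finite simple (each element of $\tilde A$ generates a finite-dimensional bilateral ideal, so separability reduces to the finite-dimensional case), then density plus the algebra-scheme closure computation of Section 5 to get $A^*=\prod_i B_i$, then \cite[6.8]{Amel} --- avoids the trace form entirely and works uniformly in all characteristics, at the cost of re-running the ``closed bilateral ideal scheme meeting $\tilde A$ trivially must vanish'' step already used in the proof of Theorem \ref{Pontrjagin}. The two soft spots you flag (descending the block decomposition to $K$, and identifying the closure of $\oplus_i B_i$ with $\prod_i B_i$) rest on exactly the machinery of \cite{Amel} that the paper itself invokes, so your proof is at least as rigorous as the paper's, and strictly more robust in positive characteristic.
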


\begin{proof}
If $G$ is linearly semisimple, then $A^*= \prod_i A_i^*$, where $A_i^*$ is a finite simple algebra for all $i$. By base change $G$ is linearly semisimple, then $A_i^*$ is a direct sum of algebras of matrices by base change to the algebraic closure of $K$. Since
$\tilde{A} = \oplus_i A_i^*$, then $\tilde{A}$ is dense in $A^*$ and it is separable.

If $\tilde{A}$ is separable then its trace form is non-degenerate. Hence, if $\tilde{A}$ is separable and dense in $A^*$, then $G$ is linearly semisimple by Theorem \ref{Pontrjagin}.
\end{proof}

\end{document}